\theoremstyle{plain}
\newtheorem{theorem}{Theorem}
\newtheorem*{theorem*}{Theorem}
\numberwithin{equation}{section}
\newcommand{\R}{\mathbb{R}}
\newcommand{\C}{\mathbb{C}}
\newcommand{\ii}{{\rm i}}
\newcommand{\ppi}{\varpi}
\newcommand{\rn}{{\rm rn}_2}
\begin{document}

\title {A root elliptic function in signature four}

\date{}

\author[P.L. Robinson]{P.L. Robinson}

\address{Department of Mathematics \\ University of Florida \\ Gainesville FL 32611  USA }

\email[]{paulr@ufl.edu}

\subjclass{} \keywords{}

\begin{abstract}
We present a new approach to elliptic functions in signature four, offering a fresh perspective on work of Li-Chien Shen. 
\end{abstract}

\maketitle

\section*{Introduction} 

\medbreak 

By modifying an approach to the classical elliptic functions ${\rm sn}, {\rm cn}$ and ${\rm dn}$ of Jacobi, Shen [2014] introduced analogous functions ${\rm sn}_2, {\rm cn}_2$ and ${\rm dn}_2$ into the Ramanujan theory of elliptic functions in signature four. Naturally, the parallel between the classical functions ${\rm sn}, {\rm cn}, {\rm dn}$ and the signature-four functions ${\rm sn}_2, {\rm cn}_2, {\rm dn}_2$ is not perfect. On the one hand, the signature-four functions satisfy algebraic equations of the familiar forms ${\rm cn}_2^2 + {\rm sn}_2^2 = 1$ and ${\rm dn}_2^2 + \kappa^2 {\rm sn}_2^2 = 1$ (in which $0 < \kappa < 1$ is the modulus). On the other hand, they satisfy differential equations of other forms: for example, the classical differential equation $({\rm dn}')^2 = (1 - {\rm dn}^2) ({\rm dn}^2 - \lambda^2)$ is replaced by $({\rm dn}_2')^2 = 2 (1 - {\rm dn}_2)({\rm dn}_2^2 - \lambda^2)$ (in which $\lambda = (1 - \kappa^2)^{1/2}$ is the complementary modulus). A further deviation from the classical theory is the fact that, whereas the even functions ${\rm dn}_2$ and ${\rm cn}_2$ are elliptic, the odd function ${\rm sn}_2$ is not. The approach in [2014] is based primarily on the specific function ${\rm dn}_2$. Here, we reconsider the approach and suggest an alternative elliptic function on which to base the signature-four theory. This alternative elliptic function is odd and leads easily to the functions ${\rm sn}_2, {\rm cn}_2$ and ${\rm dn}_2$ (along with their properties, including the non-elliptic nature of the first-named). 

\medbreak 

We begin by recounting briefly how the functions ${\rm sn}_2, {\rm cn}_2$ and ${\rm dn}_2$ are introduced in [2014]. As in the classical Jacobian theory, we fix the modulus $\kappa \in (0, 1)$ and the complementary modulus $\lambda = (1 - \kappa^2)^{1/2} \in (0, 1)$. The (acute) modular angle $\alpha$ is then specified by $\kappa = \sin \alpha$ so that also $\lambda = \cos \alpha$. 

\medbreak 

The function 
$$\R \to \R: T \mapsto \int_0^T F(\tfrac{1}{4}, \tfrac{3}{4} ; \tfrac{1}{2}; \kappa^2 \sin^2 t) \, {\rm d}t$$
is an odd strictly-increasing bijection, whose inverse we denote by $\phi$: thus, if $u \in \R$ then 
$$u = \int_0^{\phi (u)} F(\tfrac{1}{4}, \tfrac{3}{4} ; \tfrac{1}{2}; \kappa^2 \sin^2 t) \, {\rm d}t.$$
The composite 
$$\psi = \arcsin (\kappa \sin \phi)$$
is an auxiliary function on $\R$ that takes its values in the interval $[- \alpha, \alpha] \subseteq (- \tfrac{1}{2} \pi, \tfrac{1}{2} \pi)$. 

\medbreak 

Now, ${\rm sn}_2, {\rm cn}_2$ and ${\rm dn}_2$ are introduced as the composites 
$${\rm sn}_2 = \sin \phi, \, {\rm cn}_2 = \cos \phi \; \; {\rm and} \; \; {\rm dn}_2 = \cos \psi$$ 
defined initially on $\R$ as domain; it is then established that ${\rm dn}_2$ extends to the complex plane as an elliptic function. The whole development in [2014] proceeds from this point: expressions are found for ${\rm dn}_2$ and its companions in terms of Weierstrass $\wp$-functions, in terms of Jacobian elliptic functions and in terms of theta functions; and results of Ramanujan relating the hypergeometric function $F(\tfrac{1}{4}, \tfrac{3}{4} ; 1; \bullet)$ to the Eisenstein series $E_4$ and $E_6$ are recovered. 

\medbreak 

We propose to approach the signature-four elliptic function theory along similar lines but from a different starting point. Specifically, rather than base our development on the function $\cos \psi$ we shall instead base it on the function $\sin \tfrac{1}{2} \psi$. As we shall see, this extends to an elliptic function with properties that are closer to those of the classical Jacobian elliptic functions: thus, it has simple zeros and simple poles; also, the first-order differential equation that it satisfies involves a quartic rather than a cubic. As it is odd, it perhaps serves partly to fill the void that is left by the non-elliptic function ${\rm sn}_2$ as a replacement for the elliptic function ${\rm sn}$. We shall write $\rn$ for the function $\sin \tfrac{1}{2} \psi$ and for its elliptic extension: the use of `r' as a prefix hints at the r\^ole of $\rn$ as a root on which to base the elliptic functions; the proximity of `r' to `s' suggests the kinship of $\rn$ to the sine function. 

\medbreak 

\section*{The root elliptic function $\rn$}

\medbreak 

With the function $\phi : \R \to \R$ defined by the rule that if $u \in \R$ then 
$$u = \int_0^{\phi (u)} F(\tfrac{1}{4}, \tfrac{3}{4} ; \tfrac{1}{2}; \kappa^2 \sin^2 t) \, {\rm d}t$$
and with the auxiliary function $\psi: \R \to \R$ defined by 
$$\psi = \arcsin (\kappa \sin \phi)$$
as above, we introduce the function $\rn : \R \to \R$ by the definition 
$$\rn: = \sin \tfrac{1}{2} \psi.$$
The functions $\phi$ and $\psi$ are plainly odd. Recall that $\phi$ is a strictly-increasing bijection; we claim that the function $\psi$ is periodic. To see this, let us introduce the `complete integral'
$$K = \int_0^{\frac{1}{2} \pi}  F(\tfrac{1}{4}, \tfrac{3}{4} ; \tfrac{1}{2}; \kappa^2 \sin^2 t) \, {\rm d}t$$
and note that hypergeometric expansion and termwise integration yield 
$$K = \tfrac{1}{2} \pi \, F(\tfrac{1}{4}, \tfrac{3}{4} ; 1; \kappa^2).$$ 

\bigbreak 

\begin{theorem} \label{4K}
The function $\rn: \R \to \R$ has period $4K.$  
\end{theorem}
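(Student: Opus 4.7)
The plan is to propagate a shift of $2K$ through the definitional chain $\phi \mapsto \psi \mapsto \rn$, showing first that $\rn(u+2K) = -\rn(u)$ and then iterating.

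First I would exploit the fact that the integrand $t \mapsto F(\tfrac{1}{4}, \tfrac{3}{4}; \tfrac{1}{2}; \kappa^2 \sin^2 t)$ has period $\pi$ (as a function of $t$) since $\sin^2 t$ does. Splitting the integral defining $\phi^{-1}$ over $[0, T + \pi]$ as $[0, \pi] \cup [\pi, T + \pi]$ and using the translation $t \mapsto t + \pi$ on the second piece gives $\phi^{-1}(T + \pi) = 2K + \phi^{-1}(T)$, which inverts to
$$\phi(u + 2K) = \phi(u) + \pi.$$

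Next I would pass this relation to $\psi$. Since $\sin(\phi(u) + \pi) = -\sin\phi(u)$, the defining formula $\psi = \arcsin(\kappa \sin \phi)$ yields
$$\sin \psi(u + 2K) = -\kappa \sin \phi(u) = \sin(-\psi(u)).$$
Because $\psi$ takes its values in $[-\alpha, \alpha] \subseteq (-\tfrac{1}{2}\pi, \tfrac{1}{2}\pi)$, on which $\sin$ is injective, this forces $\psi(u + 2K) = -\psi(u)$.

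Finally, I would conclude $\rn(u + 2K) = \sin \tfrac{1}{2} \psi(u + 2K) = \sin(-\tfrac{1}{2}\psi(u)) = -\rn(u)$, and a second application gives $\rn(u + 4K) = \rn(u)$. The only delicate point is the injectivity invoked when promoting the identity on $\sin \psi$ to an identity on $\psi$ itself; the range bound $\psi(\R) \subseteq [-\alpha, \alpha]$ recorded in the Introduction handles it cleanly, so no serious obstacle arises.
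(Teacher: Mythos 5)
Your proposal is correct and follows essentially the same route as the paper: split the integral defining $\phi^{-1}$ at $\pi$ using the $\pi$-periodicity of the integrand to get $\phi(u+2K)=\phi(u)+\pi$, pass this to $\psi(u+2K)=-\psi(u)$ (the paper invokes oddness of $\arcsin$ where you invoke injectivity of $\sin$ on $[-\alpha,\alpha]$, an equivalent justification), and conclude $\rn(u+2K)=-\rn(u)$, hence period $4K$.
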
 

\begin{proof} 
Note first that 
$$2 K = \int_0^{\pi}  F(\tfrac{1}{4}, \tfrac{3}{4} ; \tfrac{1}{2}; \kappa^2 \sin^2 t) \, {\rm d}t.$$
With $u \in \R$ write $U = \phi(u)$. Integrate the function 
$$t \mapsto F(\tfrac{1}{4}, \tfrac{3}{4} ; \tfrac{1}{2}; \kappa^2 \sin^2 t)$$ 
over the interval $[0, \pi + U]$ divided at $\pi$: the integral over $[0, \pi]$ is exactly $2 K$; the integral over $[\pi, \pi + U]$ becomes exactly $u$ after shifting the integration variable by $\pi$. This proves that 
$$\phi (u + 2 K) = \phi(u) + \pi.$$
As the sine function merely reverses its sign when its argument is increased by $\pi$ and as the function $\arcsin$ is odd, it follows that 
$$\psi( u + 2 K) = - \psi(u).$$
Finally, 
$$\rn(u + 2 K) = - \rn(u).$$
\end{proof} 

\medbreak 

Evidently, $4 K$ is the least positive period of $\rn$. 

\medbreak 

We now turn our attention to the elliptic extendibility of the function $\rn$, which we establish by verifying that $\rn$ satisfies a first-order initial value problem whose solutions are known to be elliptic. For this purpose, we shall require the following hypergeometric evaluation; although it is a standard result, we take this opportunity to present a proof. 

\medbreak 

\begin{theorem} \label{hyper}
$$F(\tfrac{1}{4}, \tfrac{3}{4} ; \tfrac{1}{2}; \sin^2 \psi) = \frac{\cos \frac{1}{2} \psi}{\cos \psi}.$$
\end{theorem}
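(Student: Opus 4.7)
My plan is to reduce the claim to a closed-form evaluation of $F(\tfrac14,\tfrac34;\tfrac12;\cdot)$ as a symmetric combination of two binomial series, and then simplify via half-angle identities.

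Step 1 (closed form). The first thing I would do is prove the identity
$$F(\tfrac14,\tfrac34;\tfrac12;z) = \tfrac12\bigl[(1-\sqrt{z})^{-1/2} + (1+\sqrt{z})^{-1/2}\bigr].$$
The natural way is a power-series comparison in $x=\sqrt{z}$. Expanding by the generalized binomial theorem, $(1\mp x)^{-1/2} = \sum_{m\geq 0} \frac{(\tfrac12)_m}{m!}(\pm x)^m$, so the odd terms cancel in the average and one is left with $\sum_{k\geq 0} \frac{(\tfrac12)_{2k}}{(2k)!}x^{2k}$. The Pochhammer duplication identities $(\tfrac12)_{2k} = 4^k (\tfrac14)_k (\tfrac34)_k$ and $(2k)! = 4^k (\tfrac12)_k\, k!$ then convert the coefficient into $\frac{(\tfrac14)_k (\tfrac34)_k}{(\tfrac12)_k\, k!}$, recovering precisely the hypergeometric series $F(\tfrac14,\tfrac34;\tfrac12;x^2)$.

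Step 2 (trigonometric simplification). With this closed form in hand I would substitute $\sqrt{z}=\sin\psi$ (valid since $\psi$ takes values in $[-\alpha,\alpha]\subseteq(-\tfrac12\pi,\tfrac12\pi)$, so $\cos\tfrac{\psi}{2}>|\sin\tfrac{\psi}{2}|$ and all square roots are unambiguous). The key observation is the factorization $1\pm\sin\psi = (\cos\tfrac{\psi}{2}\pm\sin\tfrac{\psi}{2})^2$, whence
$$(1\pm\sin\psi)^{-1/2} = \frac{1}{\cos\tfrac{\psi}{2}\pm\sin\tfrac{\psi}{2}}.$$
Summing these over the common denominator $\cos^2\tfrac{\psi}{2}-\sin^2\tfrac{\psi}{2}=\cos\psi$ gives a numerator of $2\cos\tfrac{\psi}{2}$; dividing by $2$ delivers $\cos\tfrac{\psi}{2}/\cos\psi$ as required.

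The principal obstacle is Step 1, since Step 2 is nothing more than a double application of half-angle formulas. Step 1 is itself routine, but it rests squarely on the Pochhammer doubling relations; the real content of the theorem sits there. An alternative route for Step 1 would be to verify that both sides satisfy the hypergeometric ODE $z(1-z)y'' + (\tfrac12-2z)y' - \tfrac{3}{16}y = 0$ with matching values and derivatives at $z=0$, but the series comparison via duplication feels cleaner and more direct.
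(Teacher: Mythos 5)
Your proposal is correct and follows essentially the same two-stage route as the paper: first the closed form $F(\tfrac14,\tfrac34;\tfrac12;z^2)=\tfrac12\bigl[(1+z)^{-1/2}+(1-z)^{-1/2}\bigr]$ verified by comparing series coefficients, then the substitution $z=\sin\psi$ reduced by half-angle identities to $\cos\tfrac12\psi/\cos\psi$. The only difference is that you supply the details (the Pochhammer duplication formulas and the factorization $1\pm\sin\psi=(\cos\tfrac{\psi}{2}\pm\sin\tfrac{\psi}{2})^2$, with the sign justification) that the paper delegates to a citation and an "easy exercise."
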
 

\begin{proof} 
Here, $\psi$ may be either an acute angle or the auxiliary function $\arcsin (\kappa \sin \phi)$. This result is a special case of item (11) on page 101 in [1953] and we shall establish it in two stages. The first stage is to note that if $z \in \mathbb{D}$ is any point in the open unit disc then 
$$F(\tfrac{1}{4}, \tfrac{3}{4} ; \tfrac{1}{2}; z^2) = \tfrac{1}{2} \, \big[(1 + z)^{-1/2} + (1 - z)^{-1/2}\big];$$
this itself is a special case of item (6) on page 101 in [1953] and is readily verified by simply expanding the hypergeometric series on the left and the two geometric series on the right. The second stage is to note that 
$$\tfrac{1}{2} \, \big[(1 + \sin \psi)^{-1/2} + (1 - \sin \psi)^{-1/2}\big] = \frac{\cos \frac{1}{2} \psi}{\cos \psi};$$
this trigonometric identity may be safely left as an easy exercise.  
\end{proof} 

\medbreak 

We remark that this is essentially a special case of a result recorded by Ramanujan in his second notebook: take $n = 1/4$ in Entry 35(iii) on page 99 of [1989]. 

\medbreak 

The initial value problem satisfied by $\rn$ is as follows. 

\medbreak 

\begin{theorem} \label{IVP}
The function $\rn: \R \to \R$ satisfies the initial condition $\rn (0) = 0$ and the differential equation 
$$(\rn ')^2 = \rn^4 - \rn^2 + \tfrac{1}{4} \kappa^2.$$
\end{theorem}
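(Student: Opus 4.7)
The plan is to compute $\rn'$ directly from its definition, exploit Theorem \ref{hyper} to handle the hypergeometric factor, and then recast everything in terms of $\rn$ using half-angle identities.

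First I would handle the initial condition: by inspection $\phi(0)=0$, hence $\psi(0)=\arcsin 0=0$, hence $\rn(0)=\sin 0 = 0$. Then I would differentiate $\phi$ via the inverse function theorem applied to the defining integral, obtaining
$$\phi'(u) \;=\; \frac{1}{F(\tfrac14,\tfrac34;\tfrac12;\kappa^2\sin^2\phi(u))}.$$
Because $\sin\psi = \kappa\sin\phi$ by definition of $\psi$, the hypergeometric argument is $\sin^2\psi$, so Theorem \ref{hyper} rewrites this as
$$\phi' \;=\; \frac{\cos\psi}{\cos\tfrac12\psi}.$$

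Next I would differentiate $\psi=\arcsin(\kappa\sin\phi)$ using $\cos\psi=\sqrt{1-\kappa^2\sin^2\phi}$, which gives $\psi' = \kappa\cos\phi\cdot\phi'/\cos\psi$, and then substitute the expression just found for $\phi'$; the $\cos\psi$ factors cancel and leave
$$\psi' \;=\; \frac{\kappa\cos\phi}{\cos\tfrac12\psi}.$$
Differentiating $\rn = \sin\tfrac12\psi$ therefore produces the very clean identity
$$\rn' \;=\; \tfrac12\cos\tfrac12\psi\cdot\psi' \;=\; \tfrac12\kappa\cos\phi,$$
which is the key algebraic miracle of the calculation.

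It remains to square and re-express $\cos^2\phi$ in terms of $\rn$. I would write $\cos^2\phi = 1-\sin^2\phi$ and use $\kappa^2\sin^2\phi = \sin^2\psi$; the double-angle formula $\sin^2\psi = 4\sin^2\tfrac12\psi\,\cos^2\tfrac12\psi = 4\rn^2(1-\rn^2)$ then converts the right-hand side entirely into a quartic polynomial in $\rn$, yielding
$$(\rn')^2 \;=\; \tfrac14\kappa^2 - \tfrac14\kappa^2\sin^2\phi \;=\; \tfrac14\kappa^2 - \rn^2 + \rn^4,$$
as required. The only step that requires any alertness is recognising that Theorem \ref{hyper} applies with $\psi = \arcsin(\kappa\sin\phi)$ rather than only with an acute angle; the rest is direct differentiation and the standard half-angle identity.
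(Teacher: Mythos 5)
Your proposal is correct and follows essentially the same route as the paper: differentiate the defining integral to get $\phi' = \cos\psi/\cos\tfrac12\psi$ via Theorem \ref{hyper}, chain-rule through $\psi$ and $\rn=\sin\tfrac12\psi$ to reach the cancellation $\rn' = \tfrac12\kappa\cos\phi$, then square and use $\sin^2\psi = 4\rn^2(1-\rn^2)$. Your extra remarks (the explicit initial condition and the observation that Theorem \ref{hyper} applies with $\psi$ the auxiliary function) are consistent with what the paper states or leaves implicit.
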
 

\begin{proof} 
Differentiate. From 
$$u = \int_0^{\phi (u)} F(\tfrac{1}{4}, \tfrac{3}{4} ; \tfrac{1}{2}; \kappa^2 \sin^2 t) \, {\rm d}t$$
and Theorem \ref{hyper} there follows 
$$\phi ' = \frac{\cos \psi}{\cos \frac{1}{2} \psi}.$$ 
From $\sin \psi = \kappa \sin \phi$ there follows 
$$\psi ' = \kappa \frac{\cos \phi}{\cos \psi} \phi'.$$ 
From the definition $\rn = \sin \tfrac{1}{2} \psi$ there follows 
$$\rn ' = \tfrac{1}{2} \cos \tfrac{1}{2} \psi \cdot \psi '.$$ 
Taking these three facts together, mass cancellation results in 
$$\rn ' = \tfrac{1}{2} \kappa \cos \phi.$$ 
Finally, squaring and trigonometric duplication yield 
$$4 (\rn ')^2 = \kappa^2 - \sin^2 \psi = \kappa^2 - 4 \sin^2 \tfrac{1}{2} \psi \cos^2 \tfrac{1}{2} \psi = \kappa^2 - 4 \rn^2 (1 - \rn^2).$$
\end{proof} 

\medbreak 

As its right-hand side is a quartic with simple zeros, this differential equation has solutions that are elliptic in the plane. As the initial value of $\rn$ is not a zero of this quartic, the extraction of $\rn$ in Weierstrassian terms from Theorem \ref{IVP} is a little more involved than is the corresponding extraction of ${\rm dn}_2$ in [2014]. To be explicit, our extraction uses the following result of Example 2 on page 454 of [1927] (to which we refer for its attribution to Weierstrass). 

\medbreak 

Let $f$ be a quartic polynomial with quadrinvariant $g_2$ and cubinvariant $g_3$: say  
$$f(z) = a_0 z^4 + 4 a_1 z^3 + 6 a_2 z^2 + 4 a_3 z + a_4$$ 
with 
$$g_2 = a_0 a_4 - 4 a_1 a_3 + 3 a_2^2$$
and 
$$g_3 = a_0 a_2 a_4 + 2 a_1 a_2 a_3 - a_2^3 - a_0 a_3^2 - a_1^2 a_4;$$ 
and assume that the zeros of $f$ are simple. Then the initial value problem 
$$(w ')^2 = f(w), \; \; w(0) = a$$
has solutions given by 
$$w = a + \frac{A \, \wp ' + \frac{1}{2} f'(a) [\wp - \frac{1}{24} f''(a)] + \frac{1}{24} f(a) f'''(a)}{2[\wp - \frac{1}{24} f''(a)]^2 - \frac{1}{48} f(a) f'''' (a)}$$ 
where $A$ is a square-root of $f(a)$ and where $\wp = \wp(\bullet ; g_2, g_3)$ is the Weierstrass function with $g_2$ and $g_3$ as its invariants. 

\medbreak 

\begin{theorem} \label{P}
The function $\rn$ is given by 
$$\rn = \frac{\tfrac{1}{4} \kappa P'}{(\tfrac{1}{4} \kappa)^2 - (\tfrac{1}{12} + P)^2} =  \frac{\tfrac{1}{4} \kappa P'}{(\tfrac{1}{4} \kappa - \tfrac{1}{12} - P) (\tfrac{1}{4} \kappa + \tfrac{1}{12} + P)}$$ 
where $P = \wp(\bullet; G_2, G_3)$ is the Weierstrass function with invariants 
$$G_2 = \tfrac{1}{12} (1 + 3 \kappa^2) \; \; and \; \; G_3 = \tfrac{1}{216} (1 - 9 \kappa^2).$$ 
\end{theorem}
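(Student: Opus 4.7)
My plan is to apply the Weierstrassian formula displayed immediately before the statement to the initial value problem of Theorem~\ref{IVP}. Accordingly I would take
$$f(w) = w^4 - w^2 + \tfrac{1}{4}\kappa^2, \qquad a = 0,$$
so that $a_0 = 1$, $a_1 = 0$, $a_2 = -\tfrac{1}{6}$, $a_3 = 0$, $a_4 = \tfrac{1}{4}\kappa^2$. The argument then splits into three short computations and one sign determination.

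The first step is to verify that the quadrinvariant $g_2$ and cubinvariant $g_3$ of $f$ coincide with the stated $G_2$ and $G_3$: this is a direct substitution into the defining formulas, considerably simplified by the vanishing of $a_1$ and $a_3$. The second step is to read off the Taylor data of $f$ at the initial point $0$. Since $f$ is even, $f'(0) = 0 = f'''(0)$, so both the middle and the last summand of the numerator in the solution formula vanish; the remaining values $f(0) = \tfrac{1}{4}\kappa^2$, $f''(0) = -2$, $f''''(0) = 24$ reduce the formula to
$$w = \frac{A \, \wp'}{2[\wp + \tfrac{1}{12}]^2 - \tfrac{1}{8}\kappa^2},$$
where $A$ is a square-root of $\tfrac{1}{4}\kappa^2$. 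The third step is a purely algebraic rearrangement: dividing top and bottom by $2$ and writing $(\tfrac{1}{4}\kappa)^2 - (\tfrac{1}{12} + \wp)^2$ as a product of sum and difference recovers the two equivalent forms displayed in the theorem.

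The only point requiring genuine care is the sign of $A$, and this is where I would locate the main obstacle. I would resolve it by comparing asymptotics as $u \to 0$: using $\wp(u) \sim u^{-2}$ and $\wp'(u) \sim -2 u^{-3}$, the formula above yields $w(u) \sim -A u$ near the origin. On the other hand, the identity $\rn' = \tfrac{1}{2}\kappa \cos \phi$ recorded in the proof of Theorem~\ref{IVP} evaluates at $0$ to $\rn'(0) = \tfrac{1}{2}\kappa > 0$. Matching these forces $A = - \tfrac{1}{2}\kappa$, and the resulting minus sign migrates from the numerator into the denominator to flip the difference of squares into the form $(\tfrac{1}{4}\kappa)^2 - (\tfrac{1}{12} + P)^2$ advertised in the statement. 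Once this sign is fixed, uniqueness of solutions to the first-order initial value problem completes the identification.
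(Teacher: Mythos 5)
Your proposal is correct and follows essentially the same route as the paper: apply the quoted Weierstrassian formula to $f(z) = z^4 - z^2 + \tfrac{1}{4}\kappa^2$ with $a = 0$, and fix the square-root $A = -\tfrac{1}{2}\kappa$ by the behaviour of $\rn$ near the origin (the paper cites positivity of $\rn(t)$ for small $t>0$, which is the same criterion as your $\rn'(0) = \tfrac{1}{2}\kappa > 0$ matched against the $-Au$ asymptotics). You simply carry out explicitly the invariant and Taylor-coefficient computations that the paper leaves to the reader, and these check out.
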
 

\begin{proof} 
Apply the result of Weierstrass just quoted, with $f(z) = z^4 - z^2 + \tfrac{1}{4} \kappa^2$ and $a = 0$. The square-root $A = - \tfrac{1}{2} \kappa$ of $\tfrac{1}{4} \kappa^2$ is preferred by the fact that $\rn (t) = \sin \tfrac{1}{2} \psi (t)$ is positive when $t > 0$ is small. 
\end{proof} 

\medbreak 

As claimed, $\rn$ extends to $\C$ as an elliptic function, for which we continue the name $\rn$. 

\medbreak 

Of course, the second solution to the initial value problem in Theorem \ref{IVP} is simply $- \rn$. The fact that this initial value problem has just these two solutions may be seen (for example) by a further differentiation: after cancellation of $\rn '$ we arrive at the second-order differential equation 
$$\rn '' = 2 \rn^3 - \rn$$
in which the right-hand side is a polynomial in $\rn$; when augmented by specification of the initial values for $\rn$ and its first derivative, this differential equation has a unique solution (locally, hence globally by the principle of analytic continuation).

\medbreak 

The Weierstrass function $P$ has a fundamental pair of periods $(2 \Omega, 2 \Omega')$ with $\Omega > 0$ and $- \ii \, \Omega ' > 0$: the real period $2 \Omega$ is precisely $4 K$ as in Theorem \ref{4K}; the imaginary period $2 \Omega '$ will be revealed in due course. The first-order equation 
$$(P ')^2 = 4 P^3 - G_2 P - G_3 = 4 P^3 - \tfrac{1}{12} (1 + 3 \kappa^2) P - \tfrac{1}{216} (1 - 9 \kappa^2)$$ 
satisfied by $P$ may be rewritten in factorized form as 
$$(P ')^2 = 4 \, (P - \tfrac{1}{6}) \, (P + \tfrac{1}{12} - \tfrac{1}{4} \kappa) \, (P + \tfrac{1}{12} + \tfrac{1}{4} \kappa)$$ 
so that $P$ has midpoint values 
$$P(\Omega) = \tfrac{1}{6}, \; P(\Omega + \Omega ') = - \tfrac{1}{12} + \tfrac{1}{4} \kappa, \; P(\Omega ') = - \tfrac{1}{12} - \tfrac{1}{4} \kappa.$$

\medbreak 

It is convenient to record the following explicit formula for the square of $\rn$. 

\medbreak 

\begin{theorem} \label{square}
$$\rn^2 = \tfrac{1}{4} \kappa^2 \frac{P - \tfrac{1}{6}}{ (P + \tfrac{1}{12} - \tfrac{1}{4} \kappa) \, (P + \tfrac{1}{12} + \tfrac{1}{4} \kappa)} = \tfrac{1}{4} \kappa^2 \frac{P - \tfrac{1}{6}}{(P + \tfrac{1}{12})^2 -  (\tfrac{1}{4} \kappa)^2}.$$ 
\end{theorem}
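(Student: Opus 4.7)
The plan is simply to square the explicit formula for $\rn$ from Theorem \ref{P} and then substitute the factored expression for $(P')^2$ recorded just above the statement.

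Starting from
$$\rn = \frac{\tfrac{1}{4}\kappa \, P'}{(\tfrac{1}{4}\kappa)^2 - (\tfrac{1}{12} + P)^2},$$
I would first observe that the denominator factors as
$$(\tfrac{1}{4}\kappa)^2 - (\tfrac{1}{12} + P)^2 = -\bigl(P + \tfrac{1}{12} - \tfrac{1}{4}\kappa\bigr)\bigl(P + \tfrac{1}{12} + \tfrac{1}{4}\kappa\bigr),$$
so upon squaring, the sign disappears and the squared denominator becomes
$$\bigl[(P + \tfrac{1}{12} - \tfrac{1}{4}\kappa)(P + \tfrac{1}{12} + \tfrac{1}{4}\kappa)\bigr]^2.$$

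Next I would insert the factorized form of the Weierstrass differential equation
$$(P')^2 = 4\,(P - \tfrac{1}{6})\,(P + \tfrac{1}{12} - \tfrac{1}{4}\kappa)\,(P + \tfrac{1}{12} + \tfrac{1}{4}\kappa)$$
into the squared numerator. Two of the three linear factors of $(P')^2$ then cancel against one copy of each factor in the squared denominator, leaving
$$\rn^2 = \frac{4 \cdot (\tfrac{1}{4}\kappa)^2 \,(P - \tfrac{1}{6})}{(P + \tfrac{1}{12} - \tfrac{1}{4}\kappa)(P + \tfrac{1}{12} + \tfrac{1}{4}\kappa)} = \tfrac{1}{4}\kappa^2 \frac{P - \tfrac{1}{6}}{(P + \tfrac{1}{12})^2 - (\tfrac{1}{4}\kappa)^2},$$
which is exactly the asserted identity.

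There is no real obstacle here: the step is purely algebraic, with the only mild subtlety being that the sign flip in the factorization of the denominator disappears on squaring, so no sign ambiguity intrudes. In particular, the argument does not require revisiting the choice of square-root $A = -\tfrac{1}{2}\kappa$ made in the proof of Theorem \ref{P}.
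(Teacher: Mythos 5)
Your proposal is correct and is exactly the paper's argument: the paper's proof simply says to combine Theorem \ref{P} with the factorized differential equation for $P$, which is precisely the squaring-and-cancellation you carry out (and your constant $4\cdot(\tfrac{1}{4}\kappa)^2=\tfrac{1}{4}\kappa^2$ checks out). Your remark that the sign in the factorization is irrelevant after squaring is also apt.
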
 

\begin{proof} 
Simply combine the result of Theorem \ref{P} with the (factorized) differential equation that is satisfied by $P$. 
\end{proof} 

\medbreak 

Otherwise said, 
$$\rn^2 = \tfrac{1}{4} \kappa^2 \frac{P - P(\Omega)}{ (P - P(\Omega + \Omega')) \, (P - P(\Omega'))}.$$ 

\bigbreak 

The following circumstance is worth recording here. 

\medbreak 

\begin{theorem} \label{coperiodic}
The functions $\rn$ and $P$ are coperiodic. 
\end{theorem}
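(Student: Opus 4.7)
The plan is to establish $\Lambda_{\rn} = \Lambda_P$, where $\Lambda_P$ and $\Lambda_{\rn}$ denote the respective period lattices. The inclusion $\Lambda_P \subseteq \Lambda_{\rn}$ is essentially immediate from Theorem \ref{P}, which exhibits $\rn$ as a rational function of $P$ and $P'$; since $P$ and $P'$ are each $\Lambda_P$-periodic, so is every rational combination of them, and in particular so is $\rn$.

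For the reverse inclusion $\Lambda_{\rn} \subseteq \Lambda_P$, my intention is to count the order of $\rn$ on a fundamental parallelogram for $\Lambda_P$ directly from the formula of Theorem \ref{square}. The key facts are that $P$ has a double pole at $0$ and that at each of the half-periods $\Omega$, $\Omega + \Omega'$, $\Omega'$ the function $P - P(\text{half-period})$ vanishes to order two, because $P'$ vanishes at half-periods. Reading these off, the numerator $P - \tfrac{1}{6}$ contributes a double zero at $\Omega$ and a double pole at $0$, while the denominator $(P + \tfrac{1}{12})^2 - (\tfrac{1}{4}\kappa)^2$ contributes double zeros at $\Omega + \Omega'$ and $\Omega'$ together with a pole of order four at $0$. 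Combining, $\rn^2$ has double zeros at $0$ and $\Omega$ and double poles at $\Omega + \Omega'$ and $\Omega'$; hence $\rn$ itself has order exactly $2$ on a fundamental parallelogram of $\Lambda_P$.

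The conclusion then follows by a standard index argument. Suppose $[\Lambda_{\rn} : \Lambda_P] = n$; then a fundamental parallelogram for $\Lambda_{\rn}$ has area $1/n$ times that of one for $\Lambda_P$, so $\rn$ would have order $2/n$ on its own parallelogram. Since a nonconstant elliptic function has order at least $2$, this forces $n = 1$ and hence $\Lambda_{\rn} = \Lambda_P$.

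The step most prone to error is the divisor bookkeeping: one must resist the temptation to count the zeros of $P - \tfrac{1}{6}$, $P + \tfrac{1}{12} - \tfrac{1}{4}\kappa$ and $P + \tfrac{1}{12} + \tfrac{1}{4}\kappa$ at the half-periods as simple rather than double, and must correctly weigh the order-four pole of the denominator at the origin against the order-two pole of the numerator to produce the double zero of $\rn^2$ at $0$. Once those multiplicities are correctly tracked, the rest is a clean counting argument.
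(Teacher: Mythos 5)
Your proof is correct, but the second half takes a genuinely different route from the paper. For the inclusion of the period group of $\rn$ into that of $P$, the paper argues pointwise: given a period $\ppi$ of $\rn$, it equates $\rn^2(z+\ppi)=\rn^2(z)$ via Theorem \ref{square}, lets $z\to 0$ so that the right side vanishes, and concludes that either $\ppi$ is a pole of $P$ (hence a lattice point) or $P(\ppi)=\tfrac16$, i.e.\ $\ppi\equiv\Omega$; the latter is excluded because $2\Omega=4K$ is the least positive period of $\rn$ (Theorem \ref{4K}). You instead compute the divisor of $\rn^2$ from Theorem \ref{square} relative to the lattice of $P$ (double zeros at $0,\Omega$, double poles at $\Omega+\Omega',\Omega'$, using $P'=0$ at half-periods), conclude $\rn$ has order $2$ with respect to that lattice, and then run the index argument: if the period lattice of $\rn$ contained that of $P$ with index $n$, the order of $\rn$ on its own fundamental parallelogram would be $2/n$, impossible for $n\ge 2$ since a nonconstant elliptic function has order at least $2$; your bookkeeping of multiplicities is right, and the tacit facts you use (the period group of a nonconstant meromorphic function is discrete, hence here a lattice of finite index over that of $P$) are standard. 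What each approach buys: the paper's argument is shorter and needs only one evaluation plus the least-positive-period fact, whereas yours is purely lattice-theoretic, does not invoke the real-line statement that $4K$ is the least positive period, and yields as a by-product exactly the zero--pole count that the paper establishes separately in Theorem \ref{zeropole}.
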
 

\begin{proof} 
Theorem \ref{P} makes it clear that every period of $P$ is a period of $\rn$. In the opposite direction, let $\varpi$ be a period of $\rn$ and therefore of $\rn^2$. Equate the values of $\rn^2$ at the points $z + \ppi$ and $z$: Theorem \ref{square} yields 
$$\frac{P(z + \ppi) - \tfrac{1}{6}}{(P(z + \ppi) + \tfrac{1}{12})^2 -  (\tfrac{1}{4} \kappa)^2} = \frac{P(z) - \tfrac{1}{6}}{(P(z) + \tfrac{1}{12})^2 -  (\tfrac{1}{4} \kappa)^2}.$$
Let $z$ tend to zero: the right-hand side vanishes, whence so does the left-hand side; this raises two possibilities. The one possibility is that $\ppi$ is a pole of $P$ and therefore a period of $P$. The other possibility is that $P(\ppi) = \tfrac{1}{6}$ and therefore that $\ppi$ is congruent to $\Omega$; but this possibility is dismissed by the fact that $2 \Omega$ is the least positive period of $\rn$. 
\end{proof} 

\medbreak 

\medbreak 

The zeros and poles of $\rn$ are located as follows. 

\medbreak 

\begin{theorem} \label{zeropole}
The function $\rn$ has: simple zeros at $0$ and $\Omega$; simple poles at $\Omega + \Omega'$ and $\Omega'$.  
\end{theorem}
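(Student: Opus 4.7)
The plan is to extract the divisor of $\rn$ in a fundamental period parallelogram directly from the representation
$$\rn = \frac{\tfrac{1}{4} \kappa P'}{(\tfrac{1}{4} \kappa - \tfrac{1}{12} - P)(\tfrac{1}{4} \kappa + \tfrac{1}{12} + P)}$$
provided by Theorem \ref{P}, using the standard local behaviour of $P$ and $P'$ at the lattice points and at the half-periods.

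First I would recall that, within a fundamental parallelogram, $P$ has a unique pole (of order two) at the origin, while $P'$ has a unique pole (of order three) at the origin together with simple zeros at the three half-periods $\Omega$, $\Omega + \Omega'$, and $\Omega'$. Since $P$ is a degree-two elliptic function, each of its midpoint values is attained with total multiplicity two; combined with $P'$ vanishing at the corresponding half-period, this forces the denominator factor $P + \tfrac{1}{12} - \tfrac{1}{4}\kappa = P - P(\Omega + \Omega')$ to have a double zero at $\Omega + \Omega'$ and a double pole at $0$, and similarly $P + \tfrac{1}{12} + \tfrac{1}{4}\kappa$ to have a double zero at $\Omega'$ and a double pole at $0$. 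The three midpoint values $\tfrac{1}{6}$, $-\tfrac{1}{12} + \tfrac{1}{4}\kappa$, $-\tfrac{1}{12} - \tfrac{1}{4}\kappa$ are pairwise distinct for $\kappa \in (0,1)$, so the two denominator factors have disjoint zero sets and neither vanishes at $\Omega$.

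Next I would tabulate the orders at the four candidate points in the fundamental parallelogram. At the origin, the numerator has a pole of order three and the denominator a pole of order $2+2 = 4$, so $\rn$ has a simple zero. At $\Omega$, the numerator has a simple zero coming from $P'$ while the denominator is non-vanishing, so $\rn$ has a simple zero. At $\Omega + \Omega'$, the numerator has a simple zero (from $P'$) and the denominator has a double zero contributed by one factor only, so $\rn$ has a simple pole; the same reasoning at $\Omega'$ yields a simple pole. No further zeros or poles can arise, since outside the lattice points and the three half-periods neither $P'$ nor either denominator factor vanishes or blows up.

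I do not expect any serious obstacle: the only point that requires vigilance is confirming the distinctness of the three midpoint values of $P$ over the admissible range of $\kappa$, which is what rules out any accidental cancellation between the simple zero of $P'$ at a half-period and a zero of a denominator factor there. Once this is in hand, the orders of zeros and poles of $\rn$ can be read off by mere inspection of the numerator and denominator.
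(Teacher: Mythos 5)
Your argument is correct, and it is a close cousin of the paper's proof rather than a literal match. The paper reads the divisor off the formula for $\rn^2$ displayed after Theorem \ref{square}, namely $\rn^2 = \tfrac{1}{4}\kappa^2 (P - P(\Omega))/\big((P - P(\Omega+\Omega'))(P - P(\Omega'))\big)$: there only $P$ appears, the half-period zeros of the three factors are automatically double, and one halves the resulting orders to get simple zeros at $0,\Omega$ and simple poles at $\Omega+\Omega',\Omega'$. You instead inspect the unsquared representation of Theorem \ref{P}, which puts $P'$ in the numerator; this obliges you to invoke the standard facts that $P'$ has a triple pole at lattice points and simple zeros exactly at the half-periods, and to verify that the three midpoint values $\tfrac16$, $-\tfrac{1}{12}+\tfrac14\kappa$, $-\tfrac{1}{12}-\tfrac14\kappa$ are distinct for $\kappa\in(0,1)$ so that no denominator factor cancels the zero of $P'$ at the relevant half-period -- a point you rightly flag and settle. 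Your pole/zero bookkeeping at the four points (order $3-4=-1$ reversed at the origin giving a simple zero, simple zero of $P'$ at $\Omega$ against a nonvanishing denominator, simple zero of $P'$ against a double zero of one factor at each of $\Omega+\Omega'$ and $\Omega'$) is accurate, and your closing remark that no other zeros or poles occur is justified since the zeros of $P'$ and of $P-e_i$ are fully accounted for. The trade-off is minor: the paper's squared formula avoids any appeal to the zero set of $P'$ and the distinctness check (at the cost of extracting a square root of orders), while your route works with $\rn$ itself and so delivers the divisor directly.
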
 

\begin{proof} 
It is perhaps simplest to inspect the alternative formula displayed after Theorem \ref{square}: the double zeros in the denominator yield simple poles of $\rn$; the double zero in the numerator yields a simple zero of $\rn$; and the final simple zero of $\rn$ arises from cancellation between the double pole in the numerator and the quadruple pole in the denominator. Of course, a full accounting of zeros and poles includes points congruent modulo $(2 \Omega, 2 \Omega')$. 
\end{proof} 

\medbreak 

Thus $\rn$ has two simple zeros and two simple poles in each period parallelogram (translated so that no zeros and poles lie on its perimeter) and so $\rn$ has order two as an elliptic function. 

\medbreak 

We now make explicit the effect on $\rn$ of half-period shifts. 

\medbreak 

\begin{theorem} \label{plusOmega}
$\rn (z + \Omega) = - \rn (z).$ 
\end{theorem}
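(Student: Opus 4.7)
The plan is to exploit the fact that the desired identity is already essentially present in the proof of Theorem \ref{4K}. That proof proceeded by first establishing, for every real $u$, the intermediate conclusion $\rn(u + 2K) = -\rn(u)$; periodicity with period $4K$ was then an immediate consequence. Since the text identifies $2 \Omega$ with $4 K$ (so $\Omega = 2 K$), this intermediate conclusion reads exactly $\rn(u + \Omega) = -\rn(u)$ for all $u \in \R$.

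\medbreak

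To pass from the real line to the whole complex plane I would invoke the elliptic extension of $\rn$ supplied by Theorem \ref{P}. Both $z \mapsto \rn(z + \Omega)$ and $z \mapsto -\rn(z)$ are meromorphic on $\C$ and agree at every real point; by the identity theorem for meromorphic functions they must therefore agree on all of $\C$.

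\medbreak

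The main obstacle, if it deserves that name, is the identification $\Omega = 2 K$; this carries the real content of the statement, and is asserted essentially in passing in the paragraph preceding Theorem \ref{square}. Should one wish to sidestep it, a purely complex-analytic route is available: using Theorem \ref{square} together with the standard half-period addition formula
$$P(z + \Omega) - \tfrac{1}{6} = \frac{(\tfrac{1}{4}(1 - \kappa))(\tfrac{1}{4}(1 + \kappa))}{P(z) - \tfrac{1}{6}}$$
one can compute $\rn^2(z + \Omega)$ directly and verify that it equals $\rn^2(z)$. The sign would then have to be pinned down separately, for example by expanding near $z = 0$ and using $\rn(\Omega) = 0$ (Theorem \ref{zeropole}) together with $\rn'(0) = \tfrac{1}{2} \kappa$ (from the proof of Theorem \ref{IVP}). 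This route is markedly less tidy than the real-line argument, whose principal merit is that it delivers the correct sign automatically.
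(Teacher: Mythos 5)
Your proposal is correct and coincides with the paper's own argument: the paper's first proof is exactly your main route (the relation $\rn(u+2K)=-\rn(u)$ from the proof of Theorem \ref{4K}, with $\Omega = 2K$, propagated to $\C$ by analytic continuation), and your alternative sketch via the half-period identity for $P$ is essentially the paper's third listed proof.
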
 

\begin{proof} 
Several proofs are possible; here are three, the first two being not entirely unrelated. (i) In our proof of Theorem \ref{4K} we observed that the advertised equality is valid when $z$ lies on the real line; its validity is then propagated throughout the complex plane by the principle of analytic continuation. (ii) The function $z \mapsto \rn (z + \Omega)$ satisfies the initial value problem in Theorem \ref{IVP}: the differential equation is autonomous and we have just seen that $\rn (\Omega) = 0$. As $\rn$ does not have $\Omega$ as a period, it follows that $z \mapsto \rn(z + \Omega)$ is the second solution to the initial value problem; see the discussion following Theorem \ref{P}. (iii) Use the Weierstrassian identity 
$$(P(z + \Omega) - P(\Omega))(P(z) - P(\Omega)) = (P(\Omega' + \Omega) - P(\Omega))(P(\Omega') - P(\Omega))$$
along with the specific evaluation 
$$(P(\Omega' + \Omega) - P(\Omega))(P(\Omega') - P(\Omega)) = \tfrac{1}{4}(- 1 + \kappa) \tfrac{1}{4} (- 1 - \kappa) = (\tfrac{1}{4} \lambda)^2$$
in calculations along lines similar to those followed in the proof of the next Theorem. 
\end{proof} 

\medbreak 

Shifts by the other half-periods situate a pole at the origin, as follows. 

\medbreak 

\begin{theorem} \label{plusOmega'}
$$\rn(z + \Omega') = \frac{\frac{1}{2} P '(z)}{\tfrac{1}{6} - P(z)}.$$
\end{theorem}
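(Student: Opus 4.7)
The plan is to substitute $z \mapsto z + \Omega'$ directly into the formula of Theorem \ref{P} for $\rn$ and simplify using the Weierstrassian addition identity at the half period $\Omega'$. Writing $e_1 = \tfrac{1}{6}$, $e_2 = -\tfrac{1}{12} + \tfrac{1}{4}\kappa$ and $e_3 = -\tfrac{1}{12} - \tfrac{1}{4}\kappa$ for the midpoint values of $P$, the standard identity
$$P(z + \Omega') = e_3 + \frac{(e_1 - e_3)(e_2 - e_3)}{P(z) - e_3}$$
expresses $P(z + \Omega')$ rationally in terms of $P(z)$, and its derivative expresses $P'(z + \Omega')$ rationally in terms of $P(z)$ and $P'(z)$. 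These are the only tools needed beyond Theorem \ref{P} itself.

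The decisive arithmetic coincidences are $\tfrac{1}{12} + e_3 = -\tfrac{1}{4}\kappa$ together with the factorizations $e_2 - e_3 = \tfrac{1}{2}\kappa$ and $e_1 - e_3 = \tfrac{1}{4}(1+\kappa)$. Writing the denominator $(\tfrac{1}{4}\kappa)^2 - (\tfrac{1}{12} + P(z + \Omega'))^2$ as a difference of squares and substituting, I expect one of the two factors to collapse at once to $(e_1-e_3)(e_2-e_3)/(P(z) - e_3)$, and the other to telescope down (using the arithmetic above) to a scalar multiple of $(P(z) - \tfrac{1}{6})/(P(z) - e_3)$. Meanwhile the numerator $\tfrac{1}{4}\kappa P'(z + \Omega')$ acquires a factor $(P(z) - e_3)^{-2}$ from the derivative of the addition formula. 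In the resulting quotient, the $(P(z) - e_3)^{-2}$, the $(e_1-e_3)(e_2-e_3)$, the $\kappa$ and the $(1+\kappa)$ all cancel, leaving exactly $\tfrac{1}{2} P'(z)/(\tfrac{1}{6} - P(z))$.

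The main obstacle is purely clerical: tracking the signs and constants faithfully through the cancellations. No deeper principle than the half-period addition formula and the midpoint evaluations of $P$ recorded just before Theorem \ref{square} is required; the identity is at bottom a neat arithmetic match between the invariants $G_2, G_3$ selected in Theorem \ref{P} and the shape of the $\rn$-formula derived there. An alternative route, in the spirit of proof (iii) of Theorem \ref{plusOmega}, would compare both sides as order-two elliptic functions on the common lattice afforded by Theorem \ref{coperiodic}: one verifies matching simple poles at $0$ and $\Omega$ (with residues $1$ and $-1$, respectively) and then pins down the additive constant at any regular point; but the direct substitution appears the shorter.
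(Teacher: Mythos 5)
Your proposal is correct and follows essentially the same route as the paper: the half-period formula $P(z+\Omega') = P(\Omega') + (e_1-e_3)(e_2-e_3)/(P(z)-P(\Omega'))$ you invoke is precisely the Weierstrassian identity the paper uses, with the same constant $\tfrac{1}{8}\kappa(1+\kappa)$, followed by the same substitution into Theorem \ref{P} and simplification. The cancellation pattern you anticipate does go through exactly as described, so no gap remains.
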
 

\begin{proof} 
Taking into account the midpoint values of $P$ that are recorded immediately prior to Theorem \ref{square}, the Weierstrassian identity 
$$(P(z + \Omega') - P(\Omega'))(P(z) - P(\Omega') = (P(\Omega + \Omega') - P(\Omega'))(P(\Omega) - P(\Omega'))$$
reads 
$$(P(z + \Omega') - P(\Omega'))(P(z) - P(\Omega')) = (\tfrac{1}{2} \kappa)(\tfrac{1}{4} + \tfrac{1}{4} \kappa) = \tfrac{1}{8} \kappa (1 + \kappa)$$
so that 
$$P(z + \Omega') - P(\Omega') = \frac{\frac{1}{8} \kappa (1 + \kappa)}{P(z) - P(\Omega')}$$ 
whence we derive 
$$P '(z + \Omega ') = - \frac{\frac{1}{8} \kappa (1 + \kappa) P '(z)}{(P(z) - P(\Omega'))^2} .$$
To complete the proof, substitute these last two expressions into the formula of Theorem \ref{P} and rearrange. 
\end{proof} 

\medbreak 

Of course, it follows from Theorem \ref{plusOmega} and Theorem \ref{plusOmega'} that 
$$\rn(z + \Omega + \Omega') = \frac{\frac{1}{2} P '(z)}{P(z) - \tfrac{1}{6}}.$$
The devotee of [1927] will now be pleased to consult Miscellaneous Example 12 on page 457 in case $c = - \tfrac{1}{6}$ and $e = \tfrac{1}{2} \kappa.$ 

\medbreak 

Theorem \ref{plusOmega} tells us that shifting by $\Omega$ converts the function $\rn$ to its negative; in contrast, shifting by $\Omega'$ converts $\rn$ almost to its reciprocal. 

\medbreak 

\begin{theorem} \label{recip}
$\rn (z + \Omega')  \rn (z) = \tfrac{1}{2} \kappa.$ 
\end{theorem}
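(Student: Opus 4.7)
The plan is to combine the two preceding explicit formulas: Theorem~\ref{P} gives $\rn(z)$ as a rational expression in $P(z)$ and $P'(z)$, while Theorem~\ref{plusOmega'} gives $\rn(z+\Omega')$ likewise. Multiplying the two expressions yields
$$\rn(z+\Omega')\,\rn(z) = \frac{\tfrac{1}{8}\kappa\,(P'(z))^2}{(\tfrac{1}{6}-P(z))\,[(\tfrac{1}{4}\kappa)^2 - (\tfrac{1}{12}+P(z))^2]},$$
which conveniently collects a single factor of $(P'(z))^2$ in the numerator.

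The key observation is that the factorized differential equation for $P$ (displayed just before Theorem~\ref{square}) reads
$$(P')^2 = 4\,(P - \tfrac{1}{6})\,(P + \tfrac{1}{12} - \tfrac{1}{4}\kappa)\,(P + \tfrac{1}{12} + \tfrac{1}{4}\kappa) = -4\,(P-\tfrac{1}{6})\,[(\tfrac{1}{4}\kappa)^2 - (\tfrac{1}{12}+P)^2],$$
so the numerator factorises precisely in the form of the denominator. Substituting, the two awkward factors cancel against each other (the sign being absorbed by writing $-(P-\tfrac{1}{6}) = \tfrac{1}{6}-P$), and what remains is the constant $\tfrac{1}{8}\kappa \cdot 4 = \tfrac{1}{2}\kappa$.

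I expect no serious obstacle; the only thing to be careful about is tracking the sign in the factorization $(P+\tfrac{1}{12})^2 - (\tfrac{1}{4}\kappa)^2 = -[(\tfrac{1}{4}\kappa)^2 - (P+\tfrac{1}{12})^2]$ so that it pairs correctly with $P-\tfrac{1}{6}$ versus $\tfrac{1}{6}-P$. A more conceptual alternative, which I would mention but not elaborate, is to note that Theorem~\ref{zeropole} already implies that the zeros of $\rn(z)$ at $0,\Omega$ and poles of $\rn(z+\Omega')$ at $0,\Omega$ cancel (and similarly at $\Omega',\Omega+\Omega'$), so the product is a holomorphic elliptic function, hence constant by Liouville; the constant is then pinned down by comparing the leading behaviour near $z=0$, namely $\rn(z) = \tfrac{1}{2}\kappa\,z + O(z^3)$ from the proof of Theorem~\ref{IVP} against $\rn(z+\Omega') = 1/z + O(z)$ read off from Theorem~\ref{plusOmega'} using $P(z) = z^{-2}+O(z^2)$.
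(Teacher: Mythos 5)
Your proposal is correct and follows exactly the two routes the paper itself indicates: the primary argument by substituting the formulas of Theorem~\ref{P} and Theorem~\ref{plusOmega'} (which you carry out explicitly, correctly using the factorized equation $(P')^2 = 4(P-\tfrac{1}{6})(P+\tfrac{1}{12}-\tfrac{1}{4}\kappa)(P+\tfrac{1}{12}+\tfrac{1}{4}\kappa)$ to cancel the numerator against the denominator), and the alternative zero--pole/Liouville argument. Indeed you supply the details the paper leaves implicit, including the evaluation of the constant from the local behaviour $\rn(z)=\tfrac{1}{2}\kappa z+O(z^3)$ and $\rn(z+\Omega')=z^{-1}+O(z)$, which the paper leaves as an exercise.
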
 

\begin{proof} 
The stated identity follows easily from Theorem \ref{P} and Theorem \ref{plusOmega'} by substitution. Alternatively, we may instead play a familiar elliptic game: the $\Omega'$ shift interchanges the (simple) zeros and (simple) poles of the function $\rn$; the product of $\rn$ and its $\Omega'$-shift is thus an elliptic function without poles and so constant. We leave independent confirmation that this constant has value $\tfrac{1}{2} \kappa$ as an exercise. 
\end{proof}

\medbreak 

\section*{The functions ${\rm sn}_2, {\rm cn}_2$ and ${\rm dn}_2$}

\medbreak 

We now turn to the r\^ole of the elliptic function $\rn$ as a source for the full array of functions ${\rm sn}_2, {\rm cn}_2$ and ${\rm dn}_2$ introduced in [2014]. 

\medbreak 

We define the function ${\rm dn}_2$ afresh by the rule 
$${\rm dn}_2 = 1 - 2 \rn^2.$$ 
When restricted to the real line, the function on the right reduces to $1 - 2 \sin^2 \tfrac{1}{2} \psi = \cos \psi$ and so agrees with ${\rm dn}_2$ as defined in [2014] and as recalled at the outset of the present paper; by the principle of analytic continuation, this agreement persists throughout the complex plane, thereby justifying our use of the notation. The function ${\rm dn}_2$ defined by this new rule satisfies the initial condition ${\rm dn}_2 (0) = 1$ and the differential equation 
$$({\rm dn}_2')^2 = 2 (1 - {\rm dn}_2)({\rm dn}_2^2 - \lambda^2)$$
on account of the fact that $\rn$ satisfies the initial value problem displayed in Theorem \ref{IVP}. This enables us to express ${\rm dn}_2$ in terms of its coperiodic Weierstrass function. 

\medbreak 

\begin{theorem} \label{dn2} 
The function ${\rm dn}_2$ is given by 
$${\rm dn}_2 = 1 - \frac{\frac{1}{2} \kappa^2}{\frac{1}{3} + p}$$
where $p = \wp(\bullet; g_2, g_3)$ is the Weierstrass function with invariants 
$$g_2 = \tfrac{1}{3} (3 \lambda^2 + 1) \; \; and \; \; g_3 = \tfrac{1}{27} (9 \lambda^2 - 1).$$ 
\end{theorem}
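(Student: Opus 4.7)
The plan is to repeat the strategy of Theorem \ref{P}, but now applied to the initial value problem $({\rm dn}_2')^2 = 2(1 - {\rm dn}_2)({\rm dn}_2^2 - \lambda^2)$ with ${\rm dn}_2(0) = 1$ established just above the theorem. The right-hand side is a cubic in ${\rm dn}_2$ with simple zeros at $1$, $\lambda$, and $-\lambda$, and, crucially, the initial value sits \emph{at} one of these zeros. This is precisely the case in which the quartic-based Weierstrass recipe quoted before Theorem \ref{P} does not apply cleanly; instead one uses the reciprocal substitution that is standard when the initial value is itself a zero of the cubic.

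Concretely, first shift to $y = 1 - {\rm dn}_2$, so that $y(0) = 0$ and, using $1 - \lambda^2 = \kappa^2$, a direct calculation yields $(y')^2 = 2y^3 - 4y^2 + 2\kappa^2 y$. Next, pass to $v = 1/y$, for which $(v')^2 = (y')^2/y^4 = 2\kappa^2 v^3 - 4 v^2 + 2 v$. Rescaling by $\widetilde p = \tfrac{1}{2}\kappa^2 \, v$ normalises the leading coefficient to $4$, giving $(\widetilde p')^2 = 4\widetilde p^3 - 4\widetilde p^2 + \kappa^2 \widetilde p$; and the translation $p = \widetilde p - \tfrac{1}{3}$ clears the quadratic term, leaving the Weierstrass normal form $(p')^2 = 4p^3 - g_2 p - g_3$ with
$$g_2 = \tfrac{4}{3} - \kappa^2 = \tfrac{1}{3}(3\lambda^2 + 1), \quad g_3 = \tfrac{8}{27} - \tfrac{1}{3}\kappa^2 = \tfrac{1}{27}(9\lambda^2 - 1),$$
exactly the advertised invariants. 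Unwinding the chain, $p + \tfrac{1}{3} = \widetilde p = \tfrac{1}{2}\kappa^2 / y = \tfrac{1}{2}\kappa^2/(1 - {\rm dn}_2)$, which rearranges to the stated formula.

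Nothing in this calculation is truly hard. The one point requiring care is the identification of $p$ as \emph{the} Weierstrass function $\wp(\bullet; g_2, g_3)$ (pole at the origin), rather than some translate of it with the same invariants. This is forced by the initial condition: ${\rm dn}_2(0) = 1$ makes $y(0) = 0$, so $v(0) = \infty$ and $p$ must have a pole at $0$; together with the correct invariants this pins $p$ down uniquely, just as the sign choice $A = -\tfrac{1}{2}\kappa$ was pinned down in the proof of Theorem \ref{P}. If one prefers a shorter but less illuminating route, the derivation can be bypassed entirely by inserting the proposed formula into $({\rm dn}_2')^2 = 2(1 - {\rm dn}_2)({\rm dn}_2^2 - \lambda^2)$ and invoking $(p')^2 = 4p^3 - g_2 p - g_3$ with the stated invariants; the algebra collapses immediately.
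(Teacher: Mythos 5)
Your derivation is correct and is essentially the paper's own route: the paper simply cites the standard extraction (the result on page 453 of [1927], as used in [2014]) for an initial value problem whose initial value is a simple zero of the cubic, whereas you rederive that extraction by hand via $y = 1 - {\rm dn}_2$, $v = 1/y$, and the rescaling and translation to Weierstrass normal form, landing on the same invariants and the same formula. One small caveat: the ``shorter route'' you mention at the end is not conclusive on its own, because ${\rm dn}_2(0)=1$ is a zero of the cubic and the initial value problem therefore lacks uniqueness (the constant function $1$ is also a solution); your main argument avoids this difficulty precisely because it runs forward from the known elliptic function ${\rm dn}_2 = 1 - 2\rn^2$ and pins down $p$ by its invariants together with its pole at the origin.
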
 

\begin{proof} 
See [2014]: extraction of ${\rm dn}_2$ from the initial value problem presented immediately prior to the present Theorem is effected by means of the result derived on page 453 of [1927]. Note that the initial value ${\rm dn}_2 (0) = 1$ is a zero of the cubic $2 (1 - {\rm dn}_2)({\rm dn}_2^2 - \lambda^2)$. 
\end{proof} 

\medbreak 

We may of course also express ${\rm dn}_2$ in terms of the Weierstrass function $P$ on account of Theorem \ref{P}; but its identification in terms of $p$ better suits our purposes. 

\medbreak 

We shall write $(2 \omega, 2 \omega ')$ for the fundamental pair of periods for $p$ such that $\omega > 0$ and $- \ii \omega ' > 0$. We shall soon express these periods directly in terms of the modulus $\kappa$. Theorem \ref{dn2} makes it plain that $p$ and ${\rm dn}_2$ have exactly the same periods. The midpoint values of $p$ are 
$$p(\omega) = \tfrac{1}{6} + \tfrac{1}{2} \lambda, \, p(\omega + \omega ') = \tfrac{1}{6} - \tfrac{1}{2} \lambda, \, p(\omega') = - \tfrac{1}{3};$$ 
${\rm dn}_2$ has midpoint values 
$${\rm dn}_2 (\omega) = \lambda, \; {\rm dn}_2 (\omega + \omega') = - \lambda$$
and a pole at $\omega'$. 

\medbreak 

A comparison between the Weierstrass functions $p$ and $P$ is fruitful. A glance at their invariants $g_2, g_3$ and $G_2, G_3$ makes it clear that they are related. For clarity in what follows, we shall refine our notation just a little by labelling each of these $\wp$-functions with the modulus $\kappa$ from which it was derived. Thus, $P_{\kappa}$ will be the $\wp$-function $P$ of Theorem \ref{P} with quadrinvariant $\tfrac{1}{12} (1 + 3 \kappa^2)$ and cubinvariant $\tfrac{1}{216} (1 - 9 \kappa^2)$ while $p_{\kappa}$ will be the $\wp$-function $p$ of Theorem \ref{dn2} with quadrinvariant $\tfrac{1}{3} (3 \lambda^2 + 1)$ and cubinvariant $\frac{1}{27} (9 \lambda^2 - 1)$; accordingly, $p_{\lambda}$ is the $\wp$-function with quadrinvariant $\tfrac{1}{3} (3 \kappa^2 + 1)$ and cubinvariant $\frac{1}{27} (9 \kappa^2 - 1)$. 

\medbreak 

\begin{theorem} \label{Pp}
$p_{\lambda} (z) = - 2 P_{\kappa} (\ii \sqrt2 z).$ 
\end{theorem}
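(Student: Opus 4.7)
My plan is to invoke the standard homogeneity relation for Weierstrass $\wp$-functions, namely that for any nonzero complex constant $c$
$$c^2 \, \wp(c z \, ; \, g_2, g_3) = \wp(z \, ; \, c^4 g_2, \, c^6 g_3).$$
This identity is routine: the function $f(z) := c^2 \wp(c z \, ; \, g_2, g_3)$ has principal part $1/z^2$ at the origin, and a direct calculation converts the differential equation $(\wp ')^2 = 4 \wp^3 - g_2 \wp - g_3$ into $(f ')^2 = 4 f^3 - (c^4 g_2) f - (c^6 g_3)$; uniqueness of the $\wp$-function with prescribed invariants then identifies $f$ with $\wp(\bullet \, ; \, c^4 g_2, c^6 g_3)$.

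I would then specialize to $c = \ii \sqrt{2}$, so that $c^2 = -2$, $c^4 = 4$, and $c^6 = -8$. Applied to $P_{\kappa} = \wp(\bullet \, ; \, G_2, G_3)$, this yields
$$-2 \, P_{\kappa}(\ii \sqrt{2} \, z) = \wp(z \, ; \, 4 G_2, \, -8 G_3).$$
It remains to verify that the right-hand side is $p_{\lambda}$. Substituting $G_2 = \tfrac{1}{12}(1 + 3 \kappa^2)$ gives $4 G_2 = \tfrac{1}{3}(3 \kappa^2 + 1)$, and substituting $G_3 = \tfrac{1}{216}(1 - 9 \kappa^2)$ gives $-8 G_3 = \tfrac{1}{27}(9 \kappa^2 - 1)$. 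These are exactly the quadrinvariant and cubinvariant assigned to $p_{\lambda}$ in the paragraph preceding the theorem statement, so the identification is complete.

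There is no real obstacle in this argument; the theorem is essentially the arithmetic observation that the $\wp$-scaling $z \mapsto \ii \sqrt{2} \, z$ sends the invariants of $P_{\kappa}$ to those of $p_{\lambda}$. The only point that requires a little attention is the choice of the imaginary factor in $c = \ii \sqrt{2}$ rather than the real $\sqrt{2}$: it is the sign $c^2 = -2$ that accounts for the factor $-2$ on the left-hand side of the stated identity, and it is the sign $c^6 = -8$ that flips $1 - 9 \kappa^2$ into the required $9 \kappa^2 - 1$; neither sign change could be produced by a real choice of $c$.
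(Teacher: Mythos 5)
Your proof is correct and follows essentially the same route as the paper: both invoke the Weierstrass homogeneity relation with the scaling factor $\ii\sqrt2$ and verify that it carries the invariants $G_2, G_3$ of $P_{\kappa}$ onto the invariants $\tfrac{1}{3}(3\kappa^2+1)$ and $\tfrac{1}{27}(9\kappa^2-1)$ of $p_{\lambda}$. Your extra sketch justifying the homogeneity relation (and the remark on why the imaginary choice of scaling is forced) is a harmless addition; the paper simply cites the relation.
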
 

\begin{proof} 
We invoke the Weierstrassian homogeneity relation 
$$\wp( z; \gamma^4 G_2, \gamma^6 G_3) = \gamma^2 \wp(\gamma z; G_2, G_3).$$
Let $\gamma = \ii \sqrt2$: from $G_2 =  \tfrac{1}{12} (1 + 3 \kappa^2)$ and $G_3 = \tfrac{1}{216} (1 - 9 \kappa^2)$ we get $\gamma^4 G_2 = \tfrac{1}{3} (3 \kappa^2 + 1)$ and $\gamma^6 G_3 = \frac{1}{27} (9 \kappa^2 - 1)$. With these choices, revisit the homogeneity relation: its left side becomes $p_{\lambda} (z)$; its right side becomes $- 2 P_{\kappa} (\ii \sqrt2 z).$ 

\end{proof} 

\medbreak 

It follows that the period lattice of $P_{\kappa}$ is obtained from the period lattice of $p_{\lambda}$ by a $\sqrt2$ dilation and a quarter rotation; note here the switch to the complementary modulus. We may formulate this in terms of the fundamental half-periods $(\Omega_{\kappa}, \Omega_{\kappa}')$ for $P_{\kappa}$ and $(\omega_{\lambda}, \omega_{\lambda}')$ for $p_{\lambda}$: thus, 
$$\Omega_{\kappa} = - \ii \sqrt2 \omega_{\lambda}' \; \; {\rm and} \; \; \Omega_{\kappa}' = \ii \sqrt2 \omega_{\lambda}.$$ 

\medbreak 

Consider the periods of the elliptic function ${\rm dn}_2$. On the one hand, as we noted above, ${\rm dn}_2$ and $p = p_{\kappa}$ are coperiodic; in particular, ${\rm dn}_2$ has $(2 \omega_{\kappa}, 2 \omega_{\kappa}')$ as a fundamental pair of periods. On the other hand, we have the following result. 

\medbreak 

\begin{theorem} \label{dn2per}
The elliptic function ${\rm dn}_2$ has $(\Omega_{\kappa}, 2 \Omega_{\kappa}')$ as a fundamental pair of periods. 
\end{theorem}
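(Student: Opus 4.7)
The plan is to exploit the identity $\mathrm{dn}_2 = 1 - 2\,\mathrm{rn}^2$ together with what we already know about the periods of $\mathrm{rn}$ (coperiodic with $P_\kappa$, hence fundamental pair $(2\Omega_\kappa, 2\Omega_\kappa')$) and the half-period shift $\mathrm{rn}(z+\Omega_\kappa) = -\mathrm{rn}(z)$ from Theorem \ref{plusOmega}.

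First, I would show that $\Omega_\kappa$ and $2\Omega_\kappa'$ are both periods of $\mathrm{dn}_2$. The first is immediate, because $\mathrm{rn}(z+\Omega_\kappa)^2 = \mathrm{rn}(z)^2$ by Theorem \ref{plusOmega}, so $\mathrm{dn}_2(z+\Omega_\kappa) = 1 - 2\mathrm{rn}(z)^2 = \mathrm{dn}_2(z)$. The second is immediate as well, since $2\Omega_\kappa'$ is already a period of $\mathrm{rn}$ and hence of $\mathrm{rn}^2$.

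Next, I would verify that these two numbers span the full period lattice of $\mathrm{dn}_2$. Let $\varpi$ be an arbitrary period of $\mathrm{dn}_2$. Then $\mathrm{rn}(z+\varpi)^2 = \mathrm{rn}(z)^2$ for all $z$, so the meromorphic function $z \mapsto \mathrm{rn}(z+\varpi)/\mathrm{rn}(z)$ (defined off a discrete set) takes only the values $\pm 1$ and is therefore identically equal to a constant $\epsilon \in \{+1, -1\}$. If $\epsilon = +1$, then $\varpi$ is a period of $\mathrm{rn}$ and so $\varpi \in \mathbb{Z}(2\Omega_\kappa) + \mathbb{Z}(2\Omega_\kappa') \subseteq \mathbb{Z}\Omega_\kappa + \mathbb{Z}(2\Omega_\kappa')$. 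If $\epsilon = -1$, then $\mathrm{rn}(z+\varpi) = -\mathrm{rn}(z) = \mathrm{rn}(z+\Omega_\kappa)$, so $\varpi - \Omega_\kappa$ is a period of $\mathrm{rn}$ and again $\varpi \in \Omega_\kappa + \mathbb{Z}(2\Omega_\kappa) + \mathbb{Z}(2\Omega_\kappa') \subseteq \mathbb{Z}\Omega_\kappa + \mathbb{Z}(2\Omega_\kappa')$.

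Finally, since $\Omega_\kappa$ is real and positive while $\Omega_\kappa'$ lies on the negative imaginary axis, the pair $(\Omega_\kappa, 2\Omega_\kappa')$ is $\mathbb{R}$-linearly independent and so constitutes a fundamental pair of periods. The only step requiring a moment's care is the constancy of the sign $\epsilon$; this is a standard connectedness/identity-principle argument, and once granted, the rest is bookkeeping on the period lattice.
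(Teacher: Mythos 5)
Your proposal is correct, and its skeleton matches the paper's: both directions hinge on ${\rm dn}_2 = 1 - 2\rn^2$, the containment of the periods of $\rn$ among those of ${\rm dn}_2$, and Theorem~\ref{plusOmega} to see that $\Omega_\kappa$ is itself a period of ${\rm dn}_2$. Where you genuinely diverge is in the converse direction. The paper argues more coarsely: from $\rn(z+\ppi) = \pm\rn(z)$ it concludes only that $2\ppi$ is a period of $\rn$, i.e.\ that $\ppi$ lies in the half-lattice $\mathbb{Z}\Omega_\kappa + \mathbb{Z}\Omega_\kappa'$, and it must then invoke Theorem~\ref{plusOmega'} (the $\Omega_\kappa'$-shift sends $\rn$ essentially to its reciprocal) to rule out $\Omega_\kappa'$ and $\Omega_\kappa + \Omega_\kappa'$ as periods of ${\rm dn}_2$. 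You instead exploit the constancy of the sign $\epsilon$: in the case $\epsilon = -1$ you compare directly with $\rn(z+\Omega_\kappa) = -\rn(z)$ to place $\ppi$ in $\Omega_\kappa + \mathbb{Z}(2\Omega_\kappa) + \mathbb{Z}(2\Omega_\kappa')$, so every period of ${\rm dn}_2$ lands in $\mathbb{Z}\Omega_\kappa + \mathbb{Z}(2\Omega_\kappa')$ at once and Theorem~\ref{plusOmega'} is never needed. That is a small economy (and your explicit identity-principle justification of the constant sign is a point the paper leaves implicit), at the cost of not recording the amusing fact that the incongruent half-periods of $\rn$ genuinely fail to be periods of ${\rm dn}_2$. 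One trivial slip: with the paper's normalization $-\ii\,\Omega_\kappa' > 0$ (and indeed $\Omega_\kappa' = \ii\tfrac{1}{\sqrt2}\pi F(\tfrac14,\tfrac34;1;1-\kappa^2)$), the half-period $\Omega_\kappa'$ lies on the \emph{positive} imaginary axis, not the negative one; the $\R$-linear independence you need is of course unaffected.
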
 

\begin{proof} 
From the definition ${\rm dn}_2 = 1 - 2 \rn^2$ it follows in the one direction that periods of $\rn$ are periods of ${\rm dn}_2$. In the other direction, if $\ppi$ is a period of ${\rm dn}_2$ then $\rn (z + \ppi)^2 = \rn (z)^2$ so that $\rn (z + \ppi) = \pm \rn (z)$ and therefore $\rn (z + 2 \ppi) = \rn (z)$; here, all of these equalities hold identically. Finally, Theorem \ref{plusOmega} implies that the half-period $\Omega_{\kappa}$ of $\rn$ is a period of ${\rm dn}_2$ while Theorem \ref{plusOmega'} prevents the incongruent half-periods of $\rn$ from being periods of ${\rm dn}_2$. 
\end{proof} 

\medbreak 

Thus 
$$\Omega_{\kappa} = 2 \omega_{\kappa} \; \; {\rm and} \; \; \Omega_{\kappa}' = \omega_{\kappa}'.$$ 

We are suddenly in a position to render the fundamental periods of the Weierstrass functions $P_{\kappa}$ and $p_{\kappa}$ in explicit hypergeometric terms. 

\medbreak 

\begin{theorem} \label{O}
The fundamental periods $(2 \Omega_{\kappa}, 2 \Omega_{\kappa}')$ of $P_{\kappa}$ are given by 
$$\Omega_{\kappa} = \pi \, F(\tfrac{1}{4}, \tfrac{3}{4} ; 1; \kappa^2)$$ 
$$\Omega_{\kappa}' = \ii \tfrac{1}{\sqrt2} \pi \, F(\tfrac{1}{4}, \tfrac{3}{4} ; 1; 1 - \kappa^2).$$ 
\end{theorem}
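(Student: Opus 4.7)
The proof will be a matter of assembling the ingredients already prepared in the paper; no new analytic work is needed. The first formula is essentially immediate from Theorem \ref{4K} together with the hypergeometric identity for $K$, and the second formula is obtained from the first by applying the change of periods recorded in Theorem \ref{Pp} and substituting the complementary modulus.

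First I would handle $\Omega_\kappa$. Recall from the discussion following Theorem \ref{P} that the real fundamental period $2\Omega_\kappa$ of $P_\kappa$ was identified with the real period $4K$ of $\rn$ established in Theorem \ref{4K}. Consequently $\Omega_\kappa = 2K$, and the explicit evaluation
$$K = \tfrac{1}{2} \pi \, F(\tfrac{1}{4}, \tfrac{3}{4}; 1; \kappa^2)$$
recorded prior to Theorem \ref{4K} at once yields $\Omega_\kappa = \pi\, F(\tfrac14,\tfrac34;1;\kappa^2)$.

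Next I would handle $\Omega_\kappa'$ by a chain of three substitutions. The relation $\Omega_\kappa' = \ii \sqrt{2}\, \omega_\lambda$, displayed just after Theorem \ref{Pp}, reduces the question to evaluating the real fundamental half-period $\omega_\lambda$ of $p_\lambda$. Then I would invoke Theorem \ref{dn2per} together with the coperiodicity of $p_\kappa$ and ${\rm dn}_2$: from the equality $\Omega_\kappa = 2 \omega_\kappa$ recorded immediately after that theorem, we see that, for any modulus, the real fundamental half-period of the associated Weierstrass function $p$ is half the corresponding $\Omega$. Applying this with $\kappa$ replaced by $\lambda$ (legitimate because $\lambda \in (0,1)$ is itself an admissible modulus, with $\kappa$ as its complement) and feeding in the first formula already proven, we obtain
$$\omega_\lambda = \tfrac{1}{2} \Omega_\lambda = \tfrac{1}{2} \pi \, F(\tfrac{1}{4},\tfrac{3}{4}; 1; \lambda^2) = \tfrac{1}{2} \pi \, F(\tfrac{1}{4},\tfrac{3}{4}; 1; 1 - \kappa^2).$$
Multiplying by $\ii\sqrt2$ gives the advertised expression for $\Omega_\kappa'$.

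The main thing to watch is the bookkeeping: one must keep clearly distinct the half-periods of $P_\kappa$, of $p_\kappa$, and of $p_\lambda$, and remember in particular that the move from $P_\kappa$ to $p_\lambda$ in Theorem \ref{Pp} swaps real and imaginary axes (via the $\ii\sqrt2$ dilation), so that the \emph{imaginary} half-period of $P_\kappa$ is determined by the \emph{real} half-period of $p_\lambda$ and vice versa. Provided this bookkeeping is executed correctly, the proof reduces to a short substitution and no genuine obstacle remains.
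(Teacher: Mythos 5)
Your proposal is correct and follows essentially the same route as the paper: $\Omega_\kappa = 2K$ via Theorem \ref{4K} and the hypergeometric evaluation of $K$, then $\Omega_\kappa' = \ii\sqrt2\,\omega_\lambda$ from Theorem \ref{Pp} combined with $\omega_\lambda = \tfrac12\Omega_\lambda$ from the relation recorded after Theorem \ref{dn2per}, applied at the complementary modulus. The bookkeeping point you flag (the swap of real and imaginary axes under the $\ii\sqrt2$ dilation) is exactly the content the paper relies on, so nothing is missing.
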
 

\begin{proof} 
The formula for $\Omega_{\kappa}$ is already in evidence, as noted after Theorem \ref{P} in reference to Theorem \ref{4K}. The formula for $\Omega_{\kappa}'$ follows upon combining $\Omega_{\kappa}' = \ii \sqrt2 \omega_{\lambda}$ (as noted after Theorem \ref{Pp}) and $\omega_{\lambda} = \tfrac{1}{2} \Omega_{\lambda}$ (as noted after Theorem \ref{dn2per}). 
\end{proof} 

\medbreak 

The corresponding period ratio is therefore 
$$\frac{\Omega_{\kappa}'}{\Omega_{\kappa}} = \ii \frac{1}{\sqrt2} \, \frac{F(\tfrac{1}{4}, \tfrac{3}{4} ; 1; 1 - \kappa^2)}{F(\tfrac{1}{4}, \tfrac{3}{4} ; 1; \kappa^2)}.$$

\medbreak 

\begin{theorem} \label{o}
The fundamental periods $(2 \omega_{\kappa}, 2 \omega_{\kappa}')$ of $p_{\kappa}$ are given by
$$\omega_{\kappa} = \tfrac{1}{2} \pi \, F(\tfrac{1}{4}, \tfrac{3}{4} ; 1; \kappa^2)$$ 
$$\omega_{\kappa}' = \ii \tfrac{1}{\sqrt2} \pi \, F(\tfrac{1}{4}, \tfrac{3}{4} ; 1; 1 - \kappa^2).$$ 
\end{theorem}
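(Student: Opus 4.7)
The plan is to deduce Theorem \ref{o} directly from Theorem \ref{O}, via the two identifications $\Omega_{\kappa} = 2 \omega_{\kappa}$ and $\Omega_{\kappa}' = \omega_{\kappa}'$ recorded immediately after Theorem \ref{dn2per}. These identifications were obtained by comparing the fundamental pair $(2 \omega_{\kappa}, 2 \omega_{\kappa}')$ of $p_{\kappa}$ (which is coperiodic with ${\rm dn}_2$) against the fundamental pair $(\Omega_{\kappa}, 2 \Omega_{\kappa}')$ of ${\rm dn}_2$ supplied by Theorem \ref{dn2per}, matching real parts against real parts and imaginary parts against imaginary parts under the sign conventions $\omega_{\kappa}, \Omega_{\kappa} > 0$ and $- \ii \, \omega_{\kappa}', - \ii \, \Omega_{\kappa}' > 0$.

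Given those two identifications, the formula for $\omega_{\kappa}$ is obtained by halving the expression for $\Omega_{\kappa}$ in Theorem \ref{O}, while the formula for $\omega_{\kappa}'$ is read off directly by equating $\omega_{\kappa}'$ with $\Omega_{\kappa}'$ in Theorem \ref{O}. This reduces Theorem \ref{o} to a one-line substitution, so I would write the proof as a brief remark pointing back to Theorem \ref{O} and the bookkeeping that follows Theorem \ref{dn2per}, with no calculation required.

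There is no serious obstacle; the only point that warrants a word of caution is ensuring that the pair $(2 \omega_{\kappa}, 2 \omega_{\kappa}')$ really matches $(\Omega_{\kappa}, 2 \Omega_{\kappa}')$ in the correct order (real with real, imaginary with imaginary). That is guaranteed by the sign conventions adopted for the two Weierstrass functions $P_{\kappa}$ and $p_{\kappa}$, so the matching is unambiguous.
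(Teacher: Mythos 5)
Your proposal is correct and is essentially the paper's own proof: the paper likewise obtains Theorem \ref{o} by combining the hypergeometric expressions of Theorem \ref{O} with the identifications $\Omega_{\kappa} = 2\omega_{\kappa}$ and $\Omega_{\kappa}' = \omega_{\kappa}'$ recorded after Theorem \ref{dn2per}. Nothing further is needed.
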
 

\begin{proof} 
A consequence of Theorem \ref{O} and the formulae that immediately precede it. Of course, these formulae for half-periods agree with the formulae for periods appearing in Theorem 3.3 of [2014]. 
\end{proof} 

\medbreak 

The corresponding period ratio is therefore 
$$\frac{\omega_{\kappa}'}{\omega_{\kappa}} = \ii \sqrt2 \, \frac{F(\tfrac{1}{4}, \tfrac{3}{4} ; 1; 1 - \kappa^2)}{F(\tfrac{1}{4}, \tfrac{3}{4} ; 1; \kappa^2)}.$$

\medbreak 

There are several ways to approach the function ${\rm cn}_2$. One approach has an algebraic feel: notice that 
$$\cos^2 \psi = 1 - \kappa^2 \sin^2 \phi = 1 - \kappa^2 + \kappa^2 \cos^2 \phi = \lambda^2 + \kappa^2 \cos^2 \phi$$ 
so that 
$$\kappa^2 \cos^2 \phi = {\rm dn}_2 ^2 - \lambda^2 = ({\rm dn}_2 - {\rm dn}_2 (\omega)) ({\rm dn}_2 - {\rm dn}_2 (\omega + \omega'))$$
on the real line. This last function is elliptic, with double zeros (represented by $\omega$ and $\omega + \omega'$) and quadruple poles (represented by $\omega'$); accordingly, it has two square-roots that are meromorphic and indeed elliptic. We may therefore define ${\rm cn}_2$ to be the square-root of $({\rm dn}_2^2 - \lambda^2)/\kappa^2$ with value $1$ at the origin. Thus: $\kappa^2 {\rm cn}_2 ^2 = {\rm dn}_2 ^2 - \lambda^2$ and ${\rm cn}_2 (0) = 1.$ 

\medbreak 

However, there is a more direct approach. Recall from the proof of Theorem \ref{IVP} that the functions $\rn : \R \to \R$ and $\cos \phi : \R \to \R$ satisfy $\rn ' = \tfrac{1}{2} \kappa \cos \phi$. This at once shows us that $\cos \phi$ extends to $\C$ as an elliptic function, which we call ${\rm cn}_2.$ 

\medbreak 

\begin{theorem} \label{cn2} 
The elliptic function ${\rm cn}_2$ defined by 
$$\tfrac{1}{2} \kappa \, {\rm cn}_2 = \rn '$$ 
satisfies 
$$\kappa^2 {\rm cn}_2 ^2 = {\rm dn}_2 ^2 - \lambda^2.$$

\end{theorem}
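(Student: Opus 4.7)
The plan is to argue by direct algebraic substitution, using only Theorem \ref{IVP} (the differential equation for $\rn$) and the definition $\dd = 1 - 2\rn^2$ from the start of the section. I do not expect any genuine obstacle; the content of the statement is really just a repackaging of the quartic on the right-hand side of Theorem \ref{IVP}.

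First, I would square the defining relation $\tfrac{1}{2}\kappa\,{\rm cn}_2 = \rn'$ to obtain
$$\tfrac{1}{4}\kappa^2\,{\rm cn}_2^2 = (\rn')^2 = \rn^4 - \rn^2 + \tfrac{1}{4}\kappa^2,$$
the last step being Theorem \ref{IVP}. Multiplying through by $4$ gives $\kappa^2{\rm cn}_2^2 = 4\rn^4 - 4\rn^2 + \kappa^2$.

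Next, I would expand $\dd^2 = (1 - 2\rn^2)^2 = 1 - 4\rn^2 + 4\rn^4$ from the definition of $\dd$, so that
$$\dd^2 - \lambda^2 = (1 - \lambda^2) - 4\rn^2 + 4\rn^4 = \kappa^2 - 4\rn^2 + 4\rn^4,$$
using $\kappa^2 + \lambda^2 = 1$. Comparing the two displays yields the asserted identity $\kappa^2\,{\rm cn}_2^2 = \dd^2 - \lambda^2$. The only remark worth making is that the identity, having been established on the real line (or, for that matter, on any neighbourhood of the origin), propagates to the whole complex plane by analytic continuation, since ${\rm cn}_2$, $\dd$ and $\rn$ are all meromorphic on $\C$.
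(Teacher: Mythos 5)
Your proof is correct and is essentially the paper's own argument: the paper proves the identity by squaring the definition $\tfrac{1}{2}\kappa\,{\rm cn}_2 = \rn'$, invoking Theorem \ref{IVP}, and rewriting $4\rn^4 - 4\rn^2 + \kappa^2$ as $(1-2\rn^2)^2 - 1 + \kappa^2 = {\rm dn}_2^2 - \lambda^2$, exactly as you do. Your closing remark about analytic continuation is harmless but not needed, since all the identities used already hold on $\C$ for the extended functions.
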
 

\begin{proof} 
The relationship between ${\rm cn}_2$ and ${\rm dn}_2$ was exhibited ahead of the present Theorem; alternatively, it follows from Theorem \ref{IVP} by squaring, thus 
$$\kappa^2 {\rm cn}_2^2 = 4 (\rn ')^2 = 4 \rn^4 - 4 \rn^2 + \kappa^2 = (1 - 2 \rn^2)^2 - 1 + \kappa^2 = {\rm dn}_2 ^2 - \lambda^2.$$ 
\end{proof} 

\medbreak 

From among the various further properties of ${\rm cn}_2$ we select a few for mention. The identity ${\rm dn}_2^2 = \lambda^2 + \kappa^2 {\rm cn}_2^2$ has immediate consequences: it shows that ${\rm cn}_2$ and ${\rm dn}_2$ are copolar, both functions having poles at $\omega'$ (and points congruent); it also shows that ${\rm cn}_2$ has zeros precisely where ${\rm dn}_2 = \pm \lambda$ (at points congruent to $\omega$ and points congruent to $\omega+ \omega'$). Differentiation of the result in Theorem \ref{plusOmega} yields $\rn'(z + \Omega) = - \rn'(z)$ so that ${\rm cn}_2 (z + \Omega) = - {\rm cn}_2 (z)$; taking into account the observation that follows Theorem \ref{dn2per}, we see that ${\rm cn}_2$ has real period $4 \omega$ (twice the real period $2 \omega$ of ${\rm dn}_2$).

\medbreak 

It remains to consider the elliptic extendibility of the function ${\rm sn}_2 = \sin \phi$. We shall be brief: $\sin \phi$ {\it does not} extend to an elliptic function; in fact, more is true. 

\medbreak 

\begin{theorem} \label{snno}
The function $\sin \phi : \R \to \R$ lacks a meromorphic extension to $\C.$ 
\end{theorem}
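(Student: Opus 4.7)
The strategy is to reduce matters to the claim that a certain elliptic expression built from $\rn$ has a zero of odd order, and to invoke the standard fact that no meromorphic function can have a square with an odd-order zero.

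Assume for contradiction that $\sin\phi : \R \to \R$ admits a meromorphic extension $S$ to $\C$. On $\R$ we have $S^2 + {\rm cn}_2^2 = 1$, and since ${\rm cn}_2$ is already known to be elliptic (Theorem \ref{cn2}), analytic continuation of this identity between meromorphic functions yields $S^2 = 1 - {\rm cn}_2^2$ throughout $\C$. I would then substitute the formula $\tfrac{1}{2}\kappa\,{\rm cn}_2 = \rn'$ from Theorem \ref{cn2} together with the differential equation $(\rn')^2 = \rn^4 - \rn^2 + \tfrac{1}{4}\kappa^2$ from Theorem \ref{IVP}; a short computation produces
\[
S^2 \;=\; \tfrac{4}{\kappa^2}\,\rn^2(1-\rn^2).
\]

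The decisive step is to exhibit a simple zero of the right-hand side. Because $\rn$ is a non-constant elliptic function, it attains the value $1$ at some point $z_0 \in \C$. At $z_0$ the differential equation yields $\rn'(z_0)^2 = \tfrac{1}{4}\kappa^2 \neq 0$, so $\rn - 1$ has a simple zero there; since $\rn(z_0) + 1 = 2$ and $\rn(z_0)^2 = 1$ are non-zero, the product $\rn^2(1-\rn^2)$ inherits a simple zero at $z_0$, and hence so does $S^2$. But $S$ is meromorphic, forcing $S^2$ to have only even-order zeros --- a contradiction.

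The most non-obvious step is arriving at the identity $S^2 = (4/\kappa^2)\,\rn^2(1-\rn^2)$, which binds the hypothetical extension $S$ to concrete properties of $\rn$; once it is in hand, the simple zero at any $\rn$-preimage of $1$ (supplied by Theorem \ref{IVP}) is immediate, and the standard parity obstruction for meromorphic square roots finishes the job.
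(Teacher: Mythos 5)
Your proposal is correct and follows essentially the same route as the paper: both reduce the question to the identity $\kappa^2\sin^2\phi = 4\,\rn^2(1-\rn^2)$ and then observe, via the differential equation of Theorem \ref{IVP}, that $\rn^2(1-\rn^2)$ has a simple zero wherever $\rn = \pm 1$, so it admits no meromorphic square root. The only cosmetic difference is that you obtain the identity through ${\rm cn}_2$ and the differential equation, whereas the paper gets it directly from the trigonometric relation $\kappa\sin\phi = \sin\psi = 2\sin\tfrac{1}{2}\psi\cos\tfrac{1}{2}\psi$ on the real line.
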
 

\begin{proof} 
Note that 
$$\kappa^2 \sin^2 \phi = \sin^2 \psi = 4 \sin^2 \tfrac{1}{2} \psi \cos^2 \tfrac{1}{2} \psi$$ 
whence 
$$\tfrac{1}{4} \kappa^2 \sin^2 \phi = \rn^2 (1 - \rn^2).$$ 
Consider the elliptic function on the right side of this equation: its zeros at points where $\rn = \pm 1$ are simple, because there $(\rn ')^2 = \tfrac{1}{4} \kappa^2$ (nonzero) by Theorem \ref{IVP}; so $\rn^2 (1 - \rn^2)$ has no square-roots that are meromorphic in the plane. 
\end{proof} 

\medbreak 

The function ${\rm sn}_2^2$ is of course elliptic; along with ${\rm cn}_2^2$ and ${\rm dn}_2^2$ it satisfies the familiar equations ${\rm cn}_2^2 + {\rm sn}_2^2 = 1$ and ${\rm dn}_2^2 + \kappa^2 {\rm sn}_2^2= 1.$ 

\medbreak 

\section*{Remarks} 

\medbreak 

We close our account with a couple of remarks. 

\medbreak 

In [2016] Shen presented a context within which to treat uniformly the elliptic theories in signatures three, four and six. In signature four, the relevant elliptic functions are solutions of the differential equation 
$$(y ')^2 = T_4 (y) - (1 - 2 \mu^2)$$
in which $T_4$ is the degree four Chebyshev polynomial of the first kind and $\mu$ is a parameter; this differential equation is supplemented by choosing the initial value $y(0)$ to be one of the zeros of the quartic $T_4 (y) - (1 - 2 \mu^2)$. The solution $y_4$ to the resulting initial value problem is then rendered explicitly in Weierstrassian terms by the method that is indicated in Theorem \ref{dn2} above. 

\medbreak 

Expanding the Chebyshev polynomial $T_4$ in full, the differential equation for $y_4$ reads 
$$(y ')^2 = 8 y^4 - 8 y^2 + 2 \mu^2.$$ 
When the parameter $\mu$ is taken to be the modulus $\kappa$, this is just the differential equation for $\rn$ modulo rescaling: explicitly, the rule $y(z) = \rn (\sqrt8 z)$ defines a solution of the differential equation 
$$(y ')^2 = 8 y^4 - 8 y^2 + 2 \kappa^2;$$
a suitable shift in the argument then secures the initial condition that singles out the function $y_4$. 

\medbreak 

In this way, the elliptic function $y_4$ of [2016] engenders the elliptic functions of [2014]. 

\medbreak 

Finally, a comment regarding the evaluation of periods in Theorem \ref{O} and Theorem \ref{o}.  It is perhaps worth noting that it is customary to perform integral calculations in order to make explicit these fundamental periods. Our approach made use of just one such calculation: namely, the calculation of the `complete integral' to which we alluded before Theorem \ref{4K}. 

\bigbreak

\begin{center} 
{\small R}{\footnotesize EFERENCES}
\end{center} 
\medbreak

[1927] E.T. Whittaker and G.N. Watson, {\it A Course of Modern Analysis}, Fourth Edition, Cambridge University Press. 

\medbreak 

[1953] A. Erdelyi (director), {\it Higher Transcendental Functions}, Volume 1, McGraw-Hill. 

\medbreak 

[1989] B.C. Berndt, {\it Ramanujan's Notebooks Part II}, Springer-Verlag. 

\medbreak 

[2014] Li-Chien Shen, {\it On a theory of elliptic functions based on the incomplete integral of the hypergeometric function $_2 F_1 (\frac{1}{4}, \frac{3}{4} ; \frac{1}{2} ; z)$}, Ramanujan Journal {\bf 34} 209-225. 

\medbreak 

[2016] Li-Chien Shen, {\it On Three Differential Equations Associated with Chebyshev Polynomials of Degrees 3, 4 and 6}, Acta Mathematica Sinica, English Series {\bf 33} (1) 21-36. 

\medbreak

\end{document}